\documentclass[preprint,11pt]{article}
\pagestyle{headings}
\usepackage{amsmath}
\usepackage{amsthm}
\usepackage{amsfonts}
\usepackage[latin5]{inputenc}
\usepackage{color}

\textheight  20 true cm \textwidth  16 true cm
\setlength{\oddsidemargin}{0mm} \setlength{\evensidemargin}{0mm}
\vfuzz2pt \hfuzz2pt

\begin{document}
\numberwithin{equation}{section}
\newcounter{thmcounter}
\newcounter{Remarkcounter}
\newcounter{Defcounter}
\numberwithin{thmcounter}{section}
\newtheorem{Prop}[thmcounter]{Proposition}
\newtheorem{Corol}[thmcounter]{Corollary}
\newtheorem{theorem}[thmcounter]{Theorem}
\newtheorem{Lemma}[thmcounter]{Lemma}
\theoremstyle{definition}
\newtheorem{Def}[Defcounter]{Definition}
\theoremstyle{remark}
\newtheorem{Remark}[Remarkcounter]{Remark}
\newtheorem{Example}[Remarkcounter]{Example}

\newcommand{\COD}[3]{Codazzi equation \eqref{MinkCodazzi} for $X=e_{#1}$, $Y=e_{#2}$ and $Z=e_{#3}$}
\newcommand{\diag}{\mathrm{diag}\ }
\newcommand{\DIV}{\mathrm{div}}
\title{Some classifications of biharmonic Lorentzian hypersurfaces  in Minkowski 5-space $\mathbb E^5_1$}
\author{Nurettin Cenk Turgay \footnote{The final version of this paper is going to  appear in Mediterranean Journal of Mathematics (see \cite{TurgayMink5Bih})}\footnote{Istanbul Technical University, Faculty of Science and Letters, Department of  Mathematics, 34469 Maslak, Istanbul, Turkey} \footnote{e-mail:turgayn@itu.edu.tr} \footnote{This work is supported by Scientific Research Agency of Istanbul Technical University.}}
\date{}

\maketitle
\begin{abstract}
In this paper, we study Lorentzian hypersurfaces in Minkowski 5-space with non-diagonalizable shape operator whose characteristic polinomial is $(t-k_1)^2(t-k_3)(t-k_4)$ or $(t-k_1)^3(t-k_4)$. We proved that  in these cases, a hypersurface is biharmonic if and only if it is minimal.

\textbf{Mathematics Subject Classification (2000).} 53C40 (53C42, 53C50) 

\textbf{Keywords.} biharmonic submanifolds, Lorentzian hypersurfaces, minimal submanifolds, finite type submanifolds
\end{abstract}

\section{Introduction}\label{SectionIntrod}
Let  $M$ be an $n$-dimensional submanifold of a semi-Euclidean space $\mathbb E^m_s$ and $x:M\rightarrow\mathbb E^m_s$ an isometric immersion. $M$ is said to be biharmonic if $x$ satisfies $\Delta^2 x=0$, or, equivalently, $\Delta \vec{H} =\lambda \vec{H}$ where $\Delta$ and $\vec{H}$  are the Laplace operator and  mean curvature vector of $M$, respectively. Biharmonic hypersurfaces are studied by many geometers, after it is conjectured that every a submanifold of a Euclidean space is minimal by Bang-Yen Chen (see \cite{ChenOpenProblems,ChenRapor}).

Note that, there are some results on hypersurfaces of Euclidean spaces which provide affirmative partial solutions to Chen's original biharmonic conjecture, \cite{ChenKitap,Defever1996,HsanisField,Jiang1985}. For example, Yu Fu has studied biharmonic hypersurfaces in $\mathbb E^5$ with at most 3 principle curvatures and he has proved that the conjecture is true for this case, \cite{YuFu2014}. 

On the other hand, in semi-Euclidean spaces, there are non-minimal (or non-maximal) biharmonic submanifolds. In other words, Chen's conjecture is not valid if the ambient space is semi-Euclidean. For example, some non-minimal biharmonic surfaces in $\mathbb E^4_1$ and $\mathbb E^4_2$ were obtained in \cite{ChenIshikawa1991Bih} and \cite{ChenIshikawa1998Bih}. In particular, some results on biharmonic submanifolds of semi-Euclidean spaces have been appeared recently, \cite{Arvan1,Arvan2,Arvan3,Defever2006,Fu2013MinkBih,Papantoniou}. For example, in \cite{Arvan1} Arvanitoyeorgos et al.  proved that all biharmonic Lorentzian hypersurfaces in Minkowski 4-space are minimal. Futher,  they also proved that a biharmonic hypersurface in $\mathbb E^4_s$  has constant mean curvature in \cite{Arvan2}. In addition, in \cite{Papantoniou}, Papantoniou et al. proved that a nondegenerate biharmonic hypersurface with index 2 in  $\mathbb E^4_2$ is minimal.

In this work we study biharmonic Lorentzian hypersurfaces in Minkowski 5-space with non-diagonalizable shape operator with at most 3 distinct eigenvalues.  After  we give basic definitions and notation in Section 2, we obtain our main results in Section 3.

The hypersurfaces we are dealing with are smooth and connected unless otherwise stated.

\section{Prelimineries}

\subsection{Basic notation, formulas and definitions}
Let $\mathbb E^m_s$ denote the pseudo-Euclidean $m$-space with the canonical 
pseudo-Euclidean metric tensor $g$ of index $s$ given by  
$$
 g=-\sum\limits_{i=1}^s dx_i^2+\sum\limits_{j=s+1}^m dx_j^2,
$$
where $(x_1, x_2, \hdots, x_m)$  is a rectangular coordinate system in $\mathbb E^m_s$. A non-zero vector $v\in T_p(\mathbb E^m_s)\cong \mathbb E^m_s$  is called space-like (resp. time-like or light-like) if $\langle v,v\rangle>0$  (resp. $\langle v,v\rangle<0$ or $\langle v,v\rangle=0$), where $T_p(\mathbb E^m_s)$ $\langle\ ,\ \rangle$ denotes the indefinite inner product of $\mathbb E^m_s$.

Consider an $n$-dimensional immersed semi-Riemannian submanifold  $M^n_r$  of the space $\mathbb E^m_s$. We denote Levi-Civita connections of $\mathbb E^m_s$ and $M$ by $\widetilde{\nabla}$ and $\nabla$, respectively. Then, the Gauss and Weingarten formulas are given, respectively, by
\begin{eqnarray}
\label{MEtomGauss} \widetilde\nabla_X Y&=& \nabla_X Y + h(X,Y),\\
\label{MEtomWeingarten} \widetilde\nabla_X \zeta&=& -A_\xi(X)+\nabla^\perp_X \zeta
\end{eqnarray}
for all tangent vectors fields $X,\ Y$ and normal vector fields $\zeta$,  where $h$,  $\nabla^\perp$  and  $A$ are the second fundamental form, the normal connection and  the shape operator of $M$, respectively. Note that shape operator and the second fundamental form are related by  $\left\langle h(X, Y), \zeta \right\rangle = \left\langle A_{\zeta}X, Y \right\rangle.$
 
The Gauss and Codazzi equations are given, respectively, by
\begin{eqnarray}
\label{MinkGaussEquation}\label{GaussEq} \langle R(X,Y,)Z,W\rangle&=&\langle h(Y,Z),h(X,W)\rangle-
\langle h(X,Z),h(Y,W)\rangle,\\
\label{MinkCodazzi} (\bar \nabla_X h )(Y,Z)&=&(\bar \nabla_Y h )(X,Z),
\end{eqnarray}
where  $R$ is the curvature tensor associated with connection $\nabla$ and  $\bar \nabla h$ is defined by
$$(\bar \nabla_X h)(Y,Z)=\nabla^\perp_X h(Y,Z)-h(\nabla_X Y,Z)-h(Y,\nabla_X Z).$$

\subsection{Lorentzian hypersurfaces in $\mathbb E^5_1$}
Let $M$ be an oriented Lorentzian hypersurface in $\mathbb E^5_1$ with non-diagonalizable shape operator $S$ with at most 3 distinct eigenvalues. It is well-known that the matrix represantation of $S$ with respect to a pseudo-orthonormal appropriate frame field $\{e_1,e_2,e_3,e_4\}$ is in one of the following two forms.
\begin{align} \label{SOPCASES}
\mbox{Case I. }
S=\left( \begin{array}{cccc}
k_1 &1&0&0\\
0 &k_1&0&0\\
0 &0&k_3&0\\
0 &0&0&k_4\\
\end{array}\right),&\quad
\mbox{Case II. }S=\left( \begin{array}{cccc}
k_1 &0&0&0\\
0 &k_1&1&0\\
-1 &0&k_1&0\\
0 &0&0&k_4\\
\end{array}
\right)
\end{align}
for some smooth functions $k_1,k_3$ and $k_4$, (see \cite{Lucas2011, ONeillKitap}). With the abuse of terminology, we call these vector fields $e_1,e_2,e_3,e_4$ as principal directions and the function $s_1=\mathrm{tr} S$ as the (first) mean curvature of $M$. Note that $M$ is said to be (1-) minimal if and only if $s_1=0.$

Note that for a pseudo-orthonormal frame field $\{e_1,e_2,e_3,e_4\}$ satisfying
$$\langle e_A,e_B\rangle=1-\delta_{AB},\quad \langle e_A,e_a\rangle=0,\quad \langle e_a,e_b\rangle=\delta_{ab}$$
for all $A,B=1,2$, $a,b=3,4$, the induced connection $\nabla$ of $M$ becomes
\begin{subequations}\label{Cse1ConFormALL}
\begin{eqnarray}
\label{Cse1ConForm1} \nabla_{e_i}e_1&=& \phi_i e_1+\omega_{13}(e_i)e_3+\omega_{14}(e_i)e_4,\\
\label{Cse1ConForm2} \nabla_{e_i}e_2&=& -\phi_i e_2+\omega_{23}(e_i)e_3+\omega_{24}(e_i)e_4,\\
\label{Cse1ConForm3} \nabla_{e_i}e_3&=& \omega_{23}(e_i)e_1+\omega_{13}(e_i)e_2+\omega_{34}(e_i)e_4,\\
\label{Cse1ConForm4} \nabla_{e_i}e_4&=& \omega_{24}(e_i)e_1+\omega_{14}(e_i)e_2-\omega_{34}(e_i)e_3,
\end{eqnarray}
\end{subequations}
where $\phi_i=\phi (e_i)=\langle \nabla_{e_i}e_2,e_1\rangle$ and $\omega_{jk}(e_i)=\langle\nabla_{e_i}e_j,e_k\rangle$, i.e., $\phi=-\omega_{12}$. Laplace operator $\Delta$ is
$$\Delta=e_1e_2+e_2e_1-e_3e_3-e_4e_4-\nabla_{e_1}e_2-\nabla_{e_2}e_1+\nabla_{e_3}e_3+\nabla_{e_4}e_4.$$

From \cite[p. 165]{Lucas2011}, we see that $M$ is biharmonic if and only if 
\begin{subequations}
\begin{eqnarray}
\label{HSurfCondE51Case1e1} S(\nabla s_1)+\frac{s_1}2\nabla s_1&=&0,\\
\label{HSurfCondE51Case1e2} \Delta s_1+s_1\mathrm{tr} S^2&=&0.
\end{eqnarray}
\end{subequations}
Note that if \eqref{HSurfCondE51Case1e1} is satisfied, then $M$ is said to be an $\mbox{H}$-hypresurface, \cite{HsanisField} or a bi-conservative hypersurface, \cite{Fu2013MinkBih}. Moreover, we call \eqref{HSurfCondE51Case1e2} as biharmonic equation.

\section{Biharmonic Lorentzian hypersurfaces}
In this section we focus on Lorentzian hypersurfaces with the shape operator given in case I and case II of \eqref{SOPCASES} seperately.
\subsection{Case I}
We consider the case I in \eqref{SOPCASES}, i.e., the shape operator is
\begin{equation}\label{Case1ShapeOperator}
Se_1=k_1e_1,\quad Se_2=e_1+k_1e_2,\quad Se_3=k_3e_3,\quad Se_4=k_4e_4
\end{equation}
with the characteristic polinomial $(k-k_1)^2(k-k_3)(k-k_4)$.  In this case we have $s_1=2k_1+k_3+k_4$ and $\mathrm{tr} S^2=2k_1^2+k_3^2+k_4^2$. We also assume that the functions $k_1-k_3$, $k_1-k_4$ and $k_3-k_4$ does not vanish on $M$.

Now, assume that $M$ is biharmonic. If $s_1$ is constant then \eqref{HSurfCondE51Case1e1} implies $s_1=0$, i.e., $M$ is minimal, \cite{Arvan1}. Thus, we assume $\nabla s_1$ is non-vanishing on $M$. Then, \eqref{HSurfCondE51Case1e1} implies that $\nabla s_1$ is not only a principal direction but also an eigenvector of $S$. Thus, without loss of generality, we may assume either $e_4=\frac{\nabla s_1}{\|\nabla s_1\|}$ or $e_1=\frac{\nabla s_1}{\|\nabla s_1\|}$. If $e_1=\frac{\nabla s_1}{\|\nabla s_1\|}$, then we have $e_2(s_1)=e_3(s_1)=e_4(s_1)=0$. Moreover, Codazzi equation \eqref{MinkCodazzi} for $X=e_1,Y=Z=e_2$ gives $e_1(s_1)=0$ which implies $s_1$ is constant which yields a contradiction. Hence, we have
\begin{subequations}\label{Case1ShapeOperatorResAll}
\begin{equation}\label{Case1ShapeOperatorRes1}
e_4=\frac{\nabla s_1}{\|\nabla s_1\|}, \quad k_4=-\frac{s_1}{2},\quad 2k_1+k_3=\frac32 s_1
\end{equation}
which imply
\begin{equation}\label{Case1ShapeOperatorRes1a}
e_4(k_4)\neq0,\quad e_A(k_4)=0,\quad A=1,2,3.
\end{equation}
\end{subequations}
On the other hand, biharmonic equation \eqref{HSurfCondE51Case1e2} becomes
\begin{equation}\label{HSurfCondE51Case1e2v2}
e_4e_4(k_4)+\left(\omega_{24}(e_1)+\omega_{14}(e_2)-\omega_{34}(e_3)\right)e_4(k_4)=k_4(2k_1^2+k_3^2+k_4^2).
\end{equation}

From \eqref{Case1ShapeOperator} we see that the only non-zero terms of second fundamental form of $M$ are
\begin{align}\label{Case1ShapeOperatorSec}
\begin{split}
 h(e_1,e_2)=-k_1N,&\quad  h(e_2,e_2)=-N,\quad  h(e_3,e_3)=k_3N,\quad h(e_4,e_4)=k_4N,
\end{split}
\end{align}
where $N$ is unit the normal vector field associated with the orientation of $M$. 

We apply the Codazzi equation \eqref{MinkCodazzi} for $X=e_i,$ $Y=e_j$ and $Z=e_k$ for each triplet (i, j, k) in the set
$\{(1,4,4),(2,4,4),(3,4,4), (4,3,3), (2,1,1), (1,3,3),(3,1,2), (1,3,2), (3,1,1)\}$
and combine equations obtained with \eqref{Case1ShapeOperatorResAll}, \eqref{Case1ShapeOperatorSec} to  get
\begin{subequations}
\begin{eqnarray}
\label{Cse1ConFormEq1} \omega_{A4}(e_4)&=&0,\quad A=1,2,3,\\
\label{Cse1ConFormEq1b} e_4(k_3)&=&\omega_{34}(e_3)(k_3-k_4),\\
\label{Cse1ConFormEq2a} e_1(k_1)=e_1(k_3)&=&0,\\
\label{Cse1ConFormEq2b} \label{Cse1ConFormEq7a}\omega_{13}(e_1)=\omega_{13}(e_3)&=&0,\\
\label{Cse1ConFormEq7b}\omega_{23}(e_1)=\omega_{13}(e_2)&=& \frac{e_3(k_1)}{k_3-k_1}.
\end{eqnarray}
\end{subequations}

Note that for a tangent vector field $X$ on $M$,  $\langle X,e_4\rangle=0$ if and only if $Xk_4=0$. Thus, we have $\langle[e_1,e_2],e_4\rangle=0$ which implies 
\begin{subequations}\label{Cse1ConFormEq4ALL} 
\begin{equation}\label{Cse1ConFormEq4a}
\omega_{24}(e_1)=\omega_{14}(e_2)
\end{equation}
Moreover, by computing \COD 124 and for $X=e_1$, $Y=e_4$, $Z=e_2$ we obtain
\begin{eqnarray}
\label{Cse1ConFormEq4b}e_4(k_1)&=&\omega_{24}(e_1)(k_4-k_1),\\
\label{Cse1ConFormEq4c}\omega_{14}(e_1)&=&0.
\end{eqnarray}
\end{subequations}
Similarly, we have $\langle[e_1,e_3],e_4\rangle=\langle[e_2,e_3],e_4\rangle=0$. By combaining these equations with \eqref{MinkCodazzi}, we obtain
\begin{subequations}\label{Cse1ConFormEq5ALL} 
\begin{eqnarray}
\label{Cse1ConFormEq5a}\omega_{34}(e_1)=\omega_{14}(e_3)=\omega_{13}(e_4)&=&0,\\
\label{Cse1ConFormEq5b}\omega_{34}(e_2)=\omega_{24}(e_3)=\omega_{23}(e_4)&=&0.
\end{eqnarray}
\end{subequations}

On the other hand, because of \eqref{Case1ShapeOperatorRes1a}, we have $[e_A,e_4](k_4)=e_Ae_4(k_4)$. We compute the left hand side of this equation  by using \eqref{Case1ShapeOperatorRes1a} and \eqref{Cse1ConFormEq1} and obtain
\begin{equation}
\label{Cse1ConFormEq6}e_Ae_4(k_4)=e_Ae_4e_4(k_4)=0.
\end{equation}
Now, we want to show 
\begin{equation}
\label{Cse1ConFormClaim1}e_3(k_1)=e_3(k_3)=0.
\end{equation}
By combaining \eqref{Cse1ConFormEq4a}, \eqref{Cse1ConFormEq4b} and \eqref{Cse1ConFormEq7b} with the Gauss equation $R(e_3,e_2,e_4,e_1)=0$, we obtain
\begin{equation}
\label{Cse1ConFormClaim1Eq1}e_3(\omega_{14}(e_2))=\frac{e_3(k_1)}{k_1-k_3}\left(\omega_{14}(e_2)+\omega_{34}(e_3)\right)
\end{equation}
From which and \eqref{Cse1ConFormEq4b} we have
\begin{equation}
\label{Cse1ConFormClaim1Eq2}
e_3e_4(k_1)=\frac{e_3(k_1)}{k_1-k_3}\left(\omega_{14}(e_2)(k_3+k_4-2k_1)+\omega_{34}(e_3)(k_4-k_1)\right).
\end{equation}
Note that \eqref{Cse1ConFormEq1b} implies 
$$e_3(\omega_{34}(e_3))=e_3\left(\frac{e_4(k_3)}{k_3-k_4}\right).$$
By taking into account $e_3(2k_1+k_3)=0$, we use \eqref{Cse1ConFormClaim1Eq2} to compute the right-hand side of the above equation and we get
\begin{equation}
\label{Cse1ConFormClaim1Eq3}e_3(\omega_{34}(e_3))=\frac{2(k_3+k_4-2k_1)e_3(k_1)}{(k_3-k_4)(k_3-k_1)}\left(\omega_{14}(e_2)+\omega_{34}(e_3)\right).
\end{equation}
On the other hand, by applying $e_3$ to \eqref{HSurfCondE51Case1e2v2} and using \eqref{Cse1ConFormEq6} we have 
\begin{equation}\label{HSurfCondE51Case1e2v2e3applied}
e_3\left(\omega_{24}(e_1)+\omega_{14}(e_2)-\omega_{34}(e_3)\right)e_4(k_4)=k_4e_3(2k_1^2+k_3^2)
\end{equation}
By combaining \eqref{Cse1ConFormClaim1Eq1}, \eqref{Cse1ConFormClaim1Eq3} and \eqref{HSurfCondE51Case1e2v2e3applied}, we obtain
\begin{equation}
\label{Cse1ConFormClaim1Eq4}\frac{e_3(k_1)}{(k_3-k_4)}\left(\omega_{14}(e_2)+\omega_{34}(e_3)\right)e_4(k_4)=e_3(k_1)(k_1-k_3)k_4.
\end{equation}
which gives 
\begin{equation}
\label{Cse1ConFormClaim1Eq4b}\frac{\omega_{14}(e_2)+\omega_{34}(e_3)}{(k_3-k_4)(k_1-k_3)}=\frac {k_4}{e_4(k_4)}.
\end{equation}
on the open subset $\mathcal O=\{m\in M|e_3(k_1)|_m\neq0\}$ of $M$. By applying $e_3$ to this equation and using \eqref{Cse1ConFormClaim1Eq1}, \eqref{Cse1ConFormClaim1Eq3} we get $k_1=k_3$ on $\mathcal O$ which is a contradiction unless $\mathcal O$  is not empty. Hence, we have proved \eqref{Cse1ConFormClaim1}. Therefore, \eqref{Cse1ConFormEq7b}, \eqref{Cse1ConFormClaim1Eq1} give
\begin{equation}
\label{Cse1ConFormClaim1Res1a}\omega_{23}(e_1)=\omega_{13}(e_2)=0.
\end{equation}

By a similar way, we obtained
\begin{equation}
\label{Cse1ConFormClaim2}e_2(k_1)=e_2(k_3)=0.
\end{equation}
Moreover, by combaining this equation with \COD 233 we get
\begin{equation}
\label{Cse1ConFormClaim1Res2a}\omega_{23}(e_3)=0.
\end{equation}

By summing up the equations obtained sofar and using Gauss equation \eqref{MinkGaussEquation}, we obtain the following lemma.
\begin{Lemma}\label{Lemmmma1}
Let $M$ be a Lorentzian hypersurface in $\mathbb E^5_1$ with the shape operator given by \eqref{Case1ShapeOperator}. Then the funnctions $\xi=\omega_{14}(e_2)$ and $\eta={\omega_{34}(e_3)}$  satisfy
\begin{subequations}\label{Cse1ConFormClaim2Lemma1Eq1All}
\begin{eqnarray}
\label{Cse1ConFormClaim2Lemma1Eq1a}e_4(\xi)&=&-\xi^2-k_1k_4,\\
\label{Cse1ConFormClaim2Lemma1Eq1b}e_4(\eta)&=&\eta^2+k_3k_4,\\
\label{Cse1ConFormClaim2Lemma1Eq1c}\xi\eta&=&k_1k_3.
\end{eqnarray}
\end{subequations}
\end{Lemma}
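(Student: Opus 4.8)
The plan is to read off all three identities from the Gauss equation \eqref{MinkGaussEquation}, evaluated on three well--chosen quadruples of frame vectors, after substituting in the whole list of vanishing connection coefficients and vanishing derivatives $e_A(k_1)$, $e_A(k_3)$ ($A=1,2,3$) collected above. The first step is to record how those relations reduce the connection formulas \eqref{Cse1ConFormALL} along the directions we shall need. Writing $\xi=\omega_{14}(e_2)=\omega_{24}(e_1)$ (using \eqref{Cse1ConFormEq4a}) and $\eta=\omega_{34}(e_3)$, and invoking \eqref{Cse1ConFormEq1}, \eqref{Cse1ConFormEq4a}, \eqref{Cse1ConFormEq4c}, \eqref{Cse1ConFormEq5a}, \eqref{Cse1ConFormEq5b}, \eqref{Cse1ConFormEq7a}, \eqref{Cse1ConFormClaim1Res1a} and \eqref{Cse1ConFormClaim1Res2a}, one gets
\[
\nabla_{e_4}e_4=0,\qquad \nabla_{e_1}e_4=\xi e_1,\qquad \nabla_{e_4}e_1=\phi_4 e_1,\qquad \nabla_{e_3}e_4=-\eta e_3,
\]
\[
\nabla_{e_4}e_3=0,\qquad \nabla_{e_1}e_3=0,\qquad \nabla_{e_3}e_1=\phi_3 e_1,\qquad \nabla_{e_3}e_3=\eta e_4.
\]

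For \eqref{Cse1ConFormClaim2Lemma1Eq1a} I would apply \eqref{MinkGaussEquation} with $X=e_1$, $Y=Z=e_4$, $W=e_2$: since $h(e_1,e_4)=0$ by \eqref{Case1ShapeOperatorSec}, the geometric side reduces to $\langle h(e_4,e_4),h(e_1,e_2)\rangle$, a multiple of $k_1k_4$, while from the reduced connection above together with $[e_1,e_4]=(\xi-\phi_4)e_1$ the curvature side equals $-\big(e_4(\xi)+\xi^2\big)$, the $\phi_4$--contributions cancelling in pairs; equating the two yields \eqref{Cse1ConFormClaim2Lemma1Eq1a}. For \eqref{Cse1ConFormClaim2Lemma1Eq1b} the same scheme with $X=e_3$, $Y=Z=e_4$, $W=e_3$ works: the cross term again dies by \eqref{Case1ShapeOperatorSec}, $[e_3,e_4]=-\eta e_3$, the curvature side becomes $e_4(\eta)-\eta^2$, and the geometric side is a multiple of $k_3k_4$. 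Finally, for \eqref{Cse1ConFormClaim2Lemma1Eq1c} I would take $X=e_1$, $Y=Z=e_3$, $W=e_2$; with $\nabla_{e_1}e_3=0$, $\nabla_{e_3}e_3=\eta e_4$ and $\nabla_{e_1}e_4=\xi e_1$ one computes $R(e_1,e_3)e_3=\xi\eta\,e_1+e_1(\eta)\,e_4$, so $\langle R(e_1,e_3)e_3,e_2\rangle=\xi\eta$, whereas the geometric side is a multiple of $k_1k_3$; this gives \eqref{Cse1ConFormClaim2Lemma1Eq1c}.

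I do not expect a genuine obstacle here: the content of the lemma has effectively been front--loaded into the long string of vanishing relations proved beforehand, so what remains is disciplined bookkeeping. The two points that need care are that every connection term not displayed above really does drop out through one of \eqref{Cse1ConFormEq1}--\eqref{Cse1ConFormClaim1Res2a}, and that the quantities not under control at this stage --- namely $\phi_3$, $\phi_4$ and $e_1(\eta)$ --- cannot leak into the final identities; they do not, because $\phi_4$ enters only in cancelling pairs in the first computation, $\phi_3$ multiplies $\nabla_{e_1}e_3=0$ in the third, and $e_1(\eta)$ multiplies only $e_4$, which is orthogonal to the $e_2$ against which we pair.
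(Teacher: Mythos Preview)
Your proposal is correct and coincides with the paper's approach: the paper does not give a detailed argument but merely states that the lemma follows ``by summing up the equations obtained so far and using Gauss equation \eqref{MinkGaussEquation}''; you have simply made this explicit by naming the three Gauss--equation instances $(e_1,e_4,e_4,e_2)$, $(e_3,e_4,e_4,e_3)$, $(e_1,e_3,e_3,e_2)$ and checking that the previously established vanishing of the connection coefficients leaves exactly the terms \eqref{Cse1ConFormClaim2Lemma1Eq1a}--\eqref{Cse1ConFormClaim2Lemma1Eq1c}. Your observation that the uncontrolled quantities $\phi_3,\phi_4,e_1(\eta)$ drop out is precisely the bookkeeping the paper suppresses.
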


Now, we are ready to prove the following classification theorem
\begin{theorem}
Let $M$ be a Lorentzian hypersurface in $\mathbb E^5_1$ with the shape operator given by \eqref{Case1ShapeOperator}. Then $M$ is biharmonic if and only if it is minimal.
\end{theorem}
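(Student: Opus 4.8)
The ``if'' direction is immediate: a minimal $M$ has $s_1\equiv 0$, so $\nabla s_1=0$ and \eqref{HSurfCondE51Case1e1}--\eqref{HSurfCondE51Case1e2} hold trivially. For the converse I would assume $M$ is biharmonic and, arguing by contradiction, suppose it is not minimal. As recalled above, $s_1$ cannot be a nonzero constant, so $\nabla s_1$ is nonvanishing on a nonempty open set $U\subseteq M$; there \eqref{Case1ShapeOperatorRes1}, \eqref{Case1ShapeOperatorRes1a}, all the Codazzi and Gauss consequences obtained so far, and Lemma~\ref{Lemmmma1} are available. In particular $e_A(k_i)=0$ for $A=1,2,3$ and $i=1,3,4$, so along each integral curve of $e_4$ every one of $k_1,k_3,k_4$ and of the connection coefficients $\xi=\omega_{14}(e_2)$, $\eta=\omega_{34}(e_3)$ is a function of a single parameter.

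Writing $f'=e_4(f)$, the plan is to assemble the governing equations along such a curve: the first-order relations $k_1'=\xi(k_4-k_1)$ and $k_3'=\eta(k_3-k_4)$ from \eqref{Cse1ConFormEq4b} and \eqref{Cse1ConFormEq1b}; the content of Lemma~\ref{Lemmmma1},
\begin{equation}\label{ProposalODE}
\xi'=-\xi^2-k_1k_4,\qquad \eta'=\eta^2+k_3k_4,\qquad \xi\eta=k_1k_3;
\end{equation}
the linear constraint $2k_1+k_3=-3k_4$ from \eqref{Case1ShapeOperatorRes1}; and the biharmonic equation \eqref{HSurfCondE51Case1e2v2}, which now reads $k_4''+(2\xi-\eta)k_4'=k_4(2k_1^2+k_3^2+k_4^2)$. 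Using the linear constraint together with the expressions for $k_1',k_3',\xi',\eta'$ above, one checks that $k_4'$ and $k_4''$ are polynomials in $k_1,k_3,k_4,\xi,\eta$, so the biharmonic equation becomes a purely algebraic relation $\Phi(k_1,k_3,k_4,\xi,\eta)=0$.

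Thus $(k_1,k_3,k_4,\xi,\eta)$ flows under the autonomous first-order system consisting of $k_1'=\xi(k_4-k_1)$, $k_3'=\eta(k_3-k_4)$ and the first two equations of \eqref{ProposalODE}, while being constrained to the algebraic variety cut out by $2k_1+k_3=-3k_4$, $\xi\eta=k_1k_3$ and $\Phi=0$. Requiring the flow to preserve these equations forces their $e_4$-derivatives (reduced by the first-order system) to vanish on the variety as well; since $2k_1+k_3=-3k_4$ differentiates to an identity, each further differentiation can only add relations. Iterating, one obtains an over-determined polynomial system in $k_1,k_3,k_4,\xi,\eta$. The plan is to eliminate $\xi$ and $\eta$ and show that the remaining compatibility condition forces one of the differences $k_1-k_3$, $k_1-k_4$, $k_3-k_4$ to vanish, or forces $e_4(k_4)=0$ --- each contradicting a standing hypothesis on $U$. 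This rules out a nonminimal biharmonic hypersurface with shape operator \eqref{Case1ShapeOperator}, which is precisely the assertion.

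The reduction to a one-variable system and the bookkeeping of Codazzi and Gauss are routine and mostly already done above; the delicate point will be the elimination in the previous paragraph, where the intermediate polynomials are bulky. The real task there is to organise the computation --- for instance by first solving $\xi\eta=k_1k_3$ together with the once-differentiated biharmonic relation for $\xi$ and $\eta$ as rational functions of $k_1,k_3,k_4$ --- so that the surviving consistency requirement collapses to a manageable identity tying two principal curvatures together.
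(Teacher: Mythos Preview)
Your plan is correct and follows essentially the same route as the paper: set up the first-order system along integral curves of $e_4$, adjoin the algebraic constraints $2k_1+k_3=-3k_4$, $\xi\eta=k_1k_3$, and the biharmonic relation, then differentiate and eliminate to force a collision of principal curvatures. The paper carries out exactly this elimination explicitly, arriving at $k_1=ak_4$ for a constant $a$ satisfying a degree-nine polynomial and then showing the remaining equations force $a=-1$, whence $k_1=k_3$.
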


\begin{proof}
First, we want to prove the necessary condition. Let $M$ be a non-minimal biharmonic hypersurface. Note that if $M$ has constant mean curvature, then biharmonic equation \eqref{HSurfCondE51Case1e2v2} implies $s_1=0$. Thus, we have $k_1=-s_1/2$ is non-constant. Because of Lemma \ref{Lemmmma1}, \eqref{Cse1ConFormClaim2Lemma1Eq1All} is satisfied.

By combaining \eqref{Cse1ConFormClaim2Lemma1Eq1a} and \eqref{Cse1ConFormClaim2Lemma1Eq1b} with  \eqref{Cse1ConFormEq1b} and \eqref{Cse1ConFormEq4b} we obtain
\begin{eqnarray}
\nonumber e_4e_4(k_1)&=& \xi e_4(k_4)+\left(k_1-k_4\right) \left(k_1 k_4+2 \xi ^2\right),\\
\nonumber e_4e_4(k_3)&=& -\eta e_4(k_4)+\left(k_3-k_4\right) \left(k_3 k_4+2 \eta ^2\right)
\end{eqnarray}
from which and \eqref{Case1ShapeOperatorRes1} we get 
\begin{align}
\label{LastEquations1} e_4e_4(k_4)=  \frac{1}{3}e_4(k_4) (\eta -2 \xi )+\frac{1}{3} \left(-\left(k_3-k_4\right) \left(k_3 k_4+2 \eta ^2\right)-2 \left(k_1-k_4\right) \left(k_1 k_4+2 \xi ^2\right)\right).
\end{align}
From \eqref{Cse1ConFormEq1b}, \eqref{Cse1ConFormEq4b} and \eqref{Case1ShapeOperatorRes1} we also have 
\begin{align}
\label{LastEquations2} 3e_4(k_4)=-2 \left(k_4-k_1\right) \xi -\left(k_3-k_4\right) \eta .
\end{align}

By combaining \eqref{Case1ShapeOperatorRes1} and \eqref{LastEquations1} with biharmonic equation \eqref{HSurfCondE51Case1e2v2}, we get
\begin{align}
\label{LastEquations3}e_4(k_4) (2 \eta -\xi )=2 k_4 \left(12 k_1^2+21 k_1 k_4+10 k_4^2\right).
\end{align}
By applying $e_4$ on this equation and using \eqref{Cse1ConFormClaim2Lemma1Eq1All}, \eqref{LastEquations2} and \eqref{LastEquations3}, we obtain
\begin{align}
\label{LastEquations4} 3 \left(k_1+k_4\right) \left(5 k_1+11 k_4\right)e_4(k_4)= \left(30 k_4^3+54 k_1 k_4^2+30 k_1^2 k_4\right) \eta - \left(56 k_4^3+72 k_1 k_4^2+15 k_1^2 k_4\right) \xi.
\end{align}

By combaining \eqref{LastEquations2}-\eqref{LastEquations5} we get
\begin{eqnarray}
\label{LastEquations5} \left(2 k_1+4 k_4\right) \eta ^2+\left(k_4-k_1\right) \xi ^2=18 k_4^3+58 k_1 k_4^2+38 k_1^2 k_4,\\
\label{LastEquations6} \left(20 k_1^3+44  k_1^2k_4+64 k_1 k_4^2 +28 k_4^3\right) \eta +\left(20 k_1^3+74 k_1^2k_4 +124  k_1k_4^2+68 k_4^3\right) \xi =0.
\end{eqnarray}
Next, by using \eqref{Cse1ConFormClaim2Lemma1Eq1c}, we eleminate $\xi$ and $\eta$ from these equations and obtain $k_1=a k_4$ for a constant $a$ satisfying the nineth degree polynomial
$$100 a^9-210 a^8-4306 a^7-19687 a^6-49256 a^5-79972 a^4-86866 a^3-60384 a^2-24178 a-42840=0.$$
From \eqref{Case1ShapeOperatorRes1} we also have  $k_3=(-2a-3) k_4$. Therefore, \eqref{Cse1ConFormEq1b}, \eqref{Cse1ConFormEq4b} and \eqref{Cse1ConFormClaim2Lemma1Eq1All} imply
\begin{eqnarray}\label{LastEquations7}
 &e_4(k_4)=\frac{2a+4}{2a+3}\eta k_4,\quad  e_4(k_4)=\frac{(1-a)}a\xi k_4,&\\
 &e_4(\eta)=\eta^2+(-2a-3)k_4^2,\quad e_4(\xi)=-\xi^2-ak_4^2, \quad \xi\eta=(-2a-3)ak_4^2.&
\end{eqnarray}
A direct calculation shows that these equations imply $a=-1$ in which case we have $k_1=k_3$. However, this is a contradiction. Hence, we proved the necessary condition of the theorem.

The converse is obvious.
\end{proof}


\subsection{Case II}
Now, we consider the case III in \eqref{SOPCASES}, i.e.,  the shape operator is
\begin{equation}\label{Case2ShapeOperator}
Se_1=k_1e_1-e_3,\quad Se_2=k_1e_2,\quad Se_3=e_2+k_1e_3,\quad Se_4=k_4e_4
\end{equation}
with the characteristic polinomial $(k-k_1)^3(k-k_4)$. In this case we have $s_1=3k_1+k_4$ and $\mathrm{tr} S^2=3k_1^2+k_4^2$.

Now, assume that $M$ is non-minimal and biharmonic. By the similar reasons to the case II we considered in the previous chapter, we assume  that $\nabla s_1$ is non-vanishing on $M$ and  proportional to the principal direction $e_4$ with $k_4=-s_1/2$. Therefore, we have
\begin{subequations}\label{Case2ShapeOperatorRes1All}
\begin{equation}\label{Case2ShapeOperatorRes1}
e_4=\frac{\nabla s_1}{\|\nabla s_1\|}, \quad k_4=-\kappa,\quad k_1=\kappa
\end{equation}
for a function $\kappa$ which imply
\begin{equation}\label{Case2ShapeOperatorRes1a}
e_4(\kappa)\neq0,\quad e_A(\kappa)=0,\quad A=1,2,3.
\end{equation}
\end{subequations}
We also put $\omega_{24}(e_1)=\tau_1$, $\omega_{14}(e_2)=\tau_2$ and $-\omega_{34}(e_3)=\tau_3$ and biharmonic equation \eqref{HSurfCondE51Case1e2} becomes
\begin{equation}\label{HSurfCondE51Case2e2v2}
e_4e_4(\kappa)+\left(\tau_1+\tau_2+\tau_3\right)e_4(\kappa)=4\kappa^3.
\end{equation}

Note that, $\langle X,e_4\rangle=0$ if and only if $X\kappa=0$. Therefore, \eqref{Case2ShapeOperatorRes1a} implies
\begin{equation}\label{HSurfCase2Res1Eq1} 
\omega_{A4}(e_B)=\omega_{B4}(e_A),\quad A,B=1,2,3
\end{equation}
which implies
\begin{equation}\label{HSurfCase2Res1} 
\tau_1=\tau_2=\tau
\end{equation}
for a function $\tau$.

We apply the Codazzi equation \eqref{MinkCodazzi} for $X=e_i,$ $Y=e_j$ and $Z=e_k$ for each triplet (i, j, k) in the set
$\{(1,4,4),(2,4,4),(3,4,4), (1,1,4), (2,2,4), (3,3,4),(1,2,4), (1,3,4), (1,2,2), (1,2,3)\}$
and combine equations obtained with \eqref{Case2ShapeOperatorRes1a} and \eqref{HSurfCase2Res1Eq1} to  get
\begin{subequations}\label{Cse2ConFormEq3ALL} 
\begin{eqnarray}
\label{Cse2ConFormEq3a} \omega_{14}(e_4)=\phi_2&=&0\\
\label{Cse2ConFormEq3b} \omega_{23}(e_2)=\omega_{23}(e_3)=\omega_{23}(e_4)&=&0,\\
\label{Cse2ConFormEq3c} \omega_{24}(e_2)=\omega_{24}(e_3)=\omega_{24}(e_4)&=&0,\\
\label{Cse2ConFormEq3d} \omega_{34}(e_2)=\omega_{34}(e_4)&=&0.
\end{eqnarray}
\end{subequations}
Moreover, by combaining \eqref{Cse2ConFormEq3ALL} with Codazzi equation \eqref{MinkCodazzi} for $X=e_1$, $Y=e_2$, $Z=e_4$, for $X=e_1$, $Y=e_3$, $Z=e_4$ and Gauss equation \eqref{MinkGaussEquation} for $X=Z=e_4,\ Y=W=e_3$ we obtain
\begin{subequations}\label{Cse2ConFormEq4ALL} 
\begin{eqnarray}
\label{Cse2ConFormEq4a} \tau_3&=&\tau,\\
\label{Cse2ConFormEq4b} e_4(\kappa)&=&-2\kappa\tau,\\
\label{Cse2ConFormEq4c} e_4(\tau)&=&\kappa^2-\tau^2.
\end{eqnarray}
\end{subequations}
By combaining \eqref{HSurfCase2Res1} and \eqref{Cse2ConFormEq4ALL}   with \eqref{HSurfCondE51Case2e2v2} we obtain $\kappa=0$. Hence, we have proved
\begin{theorem}
Let $M$ be a Lorentzian hypersurface in $\mathbb E^5_1$ with the shape operator given by \eqref{Case2ShapeOperator}. Then $M$ is biharmonic if and only if it is minimal.
\end{theorem}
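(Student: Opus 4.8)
The plan is to mirror the structure already used for Case~I but exploit the fact that Case~II is algebraically much tighter. Assume $M$ is biharmonic and non-minimal. First I would rule out the constant-mean-curvature situation: if $s_1$ is constant, then \eqref{HSurfCondE51Case1e1} forces $s_1=0$ (as in \cite{Arvan1}), so $M$ would be minimal; hence $\nabla s_1$ is nowhere zero. The $\mbox{H}$-hypersurface condition \eqref{HSurfCondE51Case1e1} then says $\nabla s_1$ is an eigenvector of $S$, and since the only one-dimensional eigendirection available in the Jordan form \eqref{Case2ShapeOperator} is $e_4$ (the $k_1$-block is a genuine $3\times 3$ non-diagonalizable block, so $\nabla s_1$ cannot lie inside it without being constant, by a Codazzi argument like the one dismissing the $e_1$ case in Case~I), we may take $e_4=\nabla s_1/\|\nabla s_1\|$, which yields $k_4=-s_1/2$ and the relation $k_1=\kappa$, $k_4=-\kappa$ of \eqref{Case2ShapeOperatorRes1}, together with $e_4(\kappa)\neq 0$ and $e_A(\kappa)=0$ for $A=1,2,3$.

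Next I would extract the connection-coefficient identities. From $\langle X,e_4\rangle=0\iff X\kappa=0$ applied to the brackets $[e_A,e_B]$ one gets the symmetry $\omega_{A4}(e_B)=\omega_{B4}(e_A)$ of \eqref{HSurfCase2Res1Eq1}, hence $\tau_1=\tau_2=:\tau$. Then I would run the Codazzi equation \eqref{MinkCodazzi} through the prescribed list of triples $\{(1,4,4),(2,4,4),(3,4,4),(1,1,4),(2,2,4),(3,3,4),(1,2,4),(1,3,4),(1,2,2),(1,2,3)\}$, using the second fundamental form read off from \eqref{Case2ShapeOperator} (the nonzero components being $h(e_1,e_1)$, $h(e_2,e_3)$, $h(e_3,e_3)$, $h(e_4,e_4)$ paired against the single normal $N$ with the appropriate signs from the Lorentzian pseudo-orthonormal frame). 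This produces the vanishing relations \eqref{Cse2ConFormEq3ALL}: $\phi_2=0$, $\omega_{14}(e_4)=0$, and $\omega_{23}(e_\cdot)=\omega_{24}(e_2)=\omega_{24}(e_3)=\omega_{24}(e_4)=\omega_{34}(e_2)=\omega_{34}(e_4)=0$. Feeding these back into the Codazzi equations for $(1,2,4)$ and $(1,3,4)$ and into the Gauss equation $R(e_4,e_3,e_4,e_3)=0$ yields the closed first-order system \eqref{Cse2ConFormEq4ALL}: $\tau_3=\tau$, $e_4(\kappa)=-2\kappa\tau$, and $e_4(\tau)=\kappa^2-\tau^2$.

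Finally I would substitute into the biharmonic equation itself. With $\tau_1=\tau_2=\tau_3=\tau$, equation \eqref{HSurfCondE51Case2e2v2} reads $e_4e_4(\kappa)+3\tau\,e_4(\kappa)=4\kappa^3$. Differentiating $e_4(\kappa)=-2\kappa\tau$ along $e_4$ and using both \eqref{Cse2ConFormEq4b} and \eqref{Cse2ConFormEq4c} gives $e_4e_4(\kappa)=-2\tau e_4(\kappa)-2\kappa e_4(\tau)=4\kappa\tau^2-2\kappa(\kappa^2-\tau^2)=6\kappa\tau^2-2\kappa^3$; also $3\tau\,e_4(\kappa)=-6\kappa\tau^2$. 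Adding, the left side collapses to $-2\kappa^3$, forcing $-2\kappa^3=4\kappa^3$, i.e. $\kappa^3=0$, so $\kappa\equiv 0$; this contradicts $e_4(\kappa)\neq 0$, hence $M$ must be minimal. The converse is immediate since minimality gives $s_1=0$ and all the biharmonic equations are trivially satisfied. The only genuinely delicate point is the bookkeeping in step two — choosing the right order for the ten Codazzi triples so that each new equation can be simplified using the previously derived ones, and keeping the Lorentzian index signs consistent throughout — but no step requires more than routine computation once the frame relations \eqref{Cse1ConFormALL} and the shape operator \eqref{Case2ShapeOperator} are in hand.
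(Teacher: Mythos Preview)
Your proposal is correct and follows essentially the same route as the paper: the same reduction to $e_4=\nabla s_1/\|\nabla s_1\|$ with $k_1=\kappa,\ k_4=-\kappa$, the same symmetry \eqref{HSurfCase2Res1Eq1} giving $\tau_1=\tau_2$, the same list of Codazzi triples yielding \eqref{Cse2ConFormEq3ALL} and then \eqref{Cse2ConFormEq4ALL}, and the same final substitution into \eqref{HSurfCondE51Case2e2v2}. Your explicit computation $e_4e_4(\kappa)+3\tau e_4(\kappa)=-2\kappa^3$ leading to $6\kappa^3=0$ is exactly the content of the paper's one-line ``By combining \eqref{HSurfCase2Res1} and \eqref{Cse2ConFormEq4ALL} with \eqref{HSurfCondE51Case2e2v2} we obtain $\kappa=0$,'' just spelled out.
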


\end{document}